\numberwithin{equation}{section}
\tikzstyle{decision} = [diamond, draw,
\tikzstyle{block} = [rectangle, draw,
\tikzstyle{line} = [draw, -latex']
\tikzstyle{cloud} = [draw, ellipse, node distance=3cm,
\algnewcommand\algorithmicinput{\textbf{INPUT:}}
\algnewcommand\INPUT{\item[\algorithmicinput]}
\algnewcommand\algorithmicoutput{\textbf{OUTPUT:}}
\algnewcommand\OUTPUT{\item[\algorithmicoutput]}
\newcounter{CountAlpha}
\theoremstyle{theorem}
\newtheorem{MainThm}[CountAlpha]{Theorem}  
\theoremstyle{plain}
\newtheorem{Thm}{Theorem}[section]
\newtheorem{Lem}[Thm]{Lemma}
\theoremstyle{definition}
\newtheorem{Def}[Thm]{Definition}
\newtheorem{Rk}[Thm]{Remark}
\newtheorem{Ex}[Thm]{Example}
\theoremstyle{remark}
\newtheorem*{acknowledgements}{Acknowledgements}
\newcommand{\CC}{{\mathbb{C}}}
\newcommand{\FF}{{\mathbb{F}}}
\newcommand{\ZZ}{{\mathbb{Z}}}
\newcommand{\cA}{{\mathcal{A}}}
\newcommand{\cI}{{\mathcal{I}}}
\newcommand{\cJ}{{\mathcal{J}}}
\newcommand{\cM}{{\mathcal{M}}}
\newcommand{\Bl}{{B\ell}} 
\newcommand{\Spec}{\operatorname{Spec}}
\newcommand{\Proj}{\operatorname{Proj}}
\newcommand{\car}{\operatorname{char}}
\newcommand{\Sing}{\operatorname{Sing}}
\newcommand{\pf}{\operatorname{pf}}
\begin{document}
\title [Desingularization of generic (skew-) symmetric determinantal singularities
]
{
Desingularization of generic symmetric and generic skew-symmetric determinantal singularities
}

\let\thefootnote\relax\footnotetext{Sabrina Alexandra Gaube: 
	Institut f\"ur Mathematik, 
	Carl von Ossietzky Universit\"at Oldenburg, 
	26111 Oldenburg, Germany.
	\\
 	email: sabrina.gaube@uni-oldenburg.de}

\author[Gaube]{Sabrina Alexandra Gaube}

\footnotetext{Bernd Schober ({\large\Letter}):
	Institut f\"ur Mathematik, 
	Carl von Ossietzky Universit\"at Oldenburg, 
	26111 Oldenburg, Germany.
	\\
	{\em Current address:} None. (Hamburg, Germany).
	\\
	email: schober.math@gmail.com}

\author[Schober]{Bernd Schober}

\thanks{B.S.~was partially supported by the project 
	``Order zeta functions and resolutions of singularities" funded by the Deutsche Forschungsgemeinschaft (DFG) 
	(DFG project number: 373111162).}

\keywords{determinantal singularities, resolution of singularities, matrix structures.}

\makeatletter
\@namedef{subjclassname@2020}{\textup{2020} Mathematics Subject Classification}
\makeatother

\subjclass[2020]{14B05, 14E15, 14J17, 14M12, 13C40}

\begin{abstract}
	We discuss how to resolve generic skew-symmetric  and generic symmetric determinantal singularities. The key ingredients are (skew-) symmetry preserving matrix operations in order to deduce an inductive argument.
\end{abstract}

\maketitle

\section{Introduction}

The class of determinantal singularities generalizes the class of complete intersections and carries more structure than arbitrary singularities because of the underlying matrix structure. 
For references providing more on the background of determinantal singularities, we refer to
the survey of Fr\"uhbis-Kr\"uger and Zach \cite{FKZ}, the book of Bruns and Vetter \cite{BV}, or the book of Harris 
\cite[Lecture~9]{Harris}.
More specialized approaches are the studies of symmetric determinantal singularities, as in the articles of Gaffney and Molino \cite{Gaffney1} and \cite{Gaffney2} or the studies of skew-symmetric determinantal singularities by Bruce, Goryunov and Haslinger \cite{BGH}, 
for example.

In this article, we focus on resolution of generic skew-symmetric resp.~generic symmetric determinantal singularities.
A local variant of resolution via blowing ups of regular centers of more general determinantal singularities, namely such determinantal singularities for which there exists a procedure which simultaneous locally monomializes the initial entries (e.g. binomial entries), will be discussed in \cite{Gaube}, see also \cite{SabrinaThesis}.

\medskip 

We fix positive integers $ m, r \in \ZZ_+ $ with $ r \leq m $. 
Let $ R_0  $ be a commutative regular ring
(e.g., $ R_0 = \CC, \FF_q, \ZZ, \ZZ[T]_{\langle 2 , T \rangle }, \ldots $)
and let 
\[
	R := R_0 [x_{i,j} \mid 1 \leq i \leq j  \leq m ]
\]
be the polynomial ring with $ m^2 $ independent variables and coefficients in $ R_0 $.
Consider a generic square matrix
(In fact, more generally, one may take matrices into account that are not necessarily square matrices.
Since the (skew-) symmetric setting is guiding us to consider square matrices, we neglect the mentioned more general situation here.)
\[
	M_{m} := (x_{i,j})_ {i,j} =  
		\begin{pmatrix}
			x_{1,1}  &  x_{1,2} & \cdots     & x_{1,m}
			\\
			x_{2,1}  &  x_{2,2} & \cdots     & x_{2,m}
			\\
			\vdots & \vdots & \ddots  & \vdots    
			\\ 
			x_{m,1}  &  x_{m,2} & \cdots     & x_{m,m}
		\end{pmatrix}
		.
\]
In \cite[Section 5]{BerndPartial}, the second named author discussed how to resolve the singularities of the
generic determinantal singularity $ X_{m,r} $ 
that is defined by the vanishing of all $ r $-minors of $ M_m $,
i.e., $X_{m,r} := \Spec(R/\cJ_{m,r})$ for 
\[
	\cJ_{m,r} := \langle 
	f_{I,J} \mid I, J \subseteq \{ 1, \ldots, m \} : \# I = \# J = r \rangle
	\subset R, 
\] 
where 
$ 
	f_{I,J} := \det (M_{I,J}), 
$ 
for	$I, J \subseteq \{ 1, \ldots, m \},\# I = \# J = r$ and where $ M_{I,J} := (x_{i,j})_{i\in I, j \in J} $ denotes the $ r \times  r $ submatrix of $ M_m $ determined by $ I $ and $ J $.
This generalized a result by Vainsencher \cite{DetermResol} who provided a desingularization for generic determinantal singularities if $ R_0 $ contains an algebraic closed field.

It is a natural question to ask whether a desingularization exists for non-generic determinantal singularities which are equipped with some additional structure, i.e., where the entries $ x_{i,j} $ are replaced by more general elements in $ R $.
Furthermore, it is an interesting problem to investigate to what extent the determinantal structure resp.~the geometry of the entries contribute to the singularity and its resolution.
If $ R_0 $ contains a field of characteristic zero, there exists a resolution by Hironaka \cite{Hiro64}.
First constructive proofs have been given by Bierstone and Milman \cite{BM} and by Villamayor \cite{Vil1,Vil2}; 
we also refer to \cite{Cu,EV,Kollar}, for example.
We allow $ R_0 $ to be of positive and mixed characteristics. 
Hence, the question on the existence of a desingularization is neither known nor clear, in general.

In the present article, we provide an affirmative answer to this in the generic symmetric and generic skew-symmetric setup, where we consider the generic matrix with additional relations $x_{i,j} = x_{j,i}$ resp.~$x_{i,j} = -x_{j,i}$ for all $i,j \in \{1,\ldots,m\}$.
A different approach for not necessarily generic entries in the matrix will appear in \cite{Gaube}, where the determinantal structure takes an essential role, see also \cite{SabrinaThesis}.

\medskip 

Let us fix some notation in order to formulate our main result in the skew-symmetric  case.
First, assume $\car(R_0) \neq2$. Then $x_{i,j} = -x_{j,i}$ implies for $i = j$ that $x_{i,i} = 0$. 
Hence we introduce
\[
	I_{\rm skew} := \langle x_{i,j} + x_{j,i}, \, x_{k,k} \mid   
	1 \leq i < j \leq m, 1 \leq k \leq m
	\rangle
\]	
and define 
\[
	S_{\rm skew} := R/I_{\rm skew},  
	\hspace{10pt} 
	Z_{\rm skew} := \Spec(S_{\rm skew}),
	\hspace{10pt}
	\cI^{\rm skew}_{m,r} := \cJ_{m,r} + I_{\rm skew} ,
\] 
\[  
	Y^{\rm skew}_{m,r} := \Spec ( R / \cI^{\rm skew}_{m,r} ) \subseteq Z_{\rm skew}. 
\]
In other words, 
if $ \car(R_0) \neq 2 $, 
then
$ Y^{\rm skew}_{m,r} $ is defined by the vanishing locus of the $ r $-minors of the skew-symmetric matrix
\begin{equation}
\label{eq:Am}
	A_{m} :=  
	\begin{pmatrix}
	0  &  x_{1,2} & \cdots     & x_{1,m}
	\\
	-x_{1,2}  &  0 &      & x_{2,m}
	\\
	\vdots &  & \ddots  & \vdots    
	\\ 
	-x_{1,m}  &  -x_{2,m} & \cdots     & 0
	\end{pmatrix}
	.
\end{equation} 
Recall the following facts:
\begin{itemize}
	\item[(F1)] 
	Since $ \car(R_0) \neq 2 $, it is easy to show that the determinant of every skew-symmetric matrix of odd size is identically zero. 
	In particular, $ Y^{\rm skew}_{2\ell-1,2\ell-1} = Z_{\rm skew} $ is regular for every $ \ell \in \ZZ_+ $.
	
	\item[(F2)] 
	By \cite[Theorem 3.1.(1)]{KLS}, $\sqrt{\langle 2\ell\text{-minors of }A_m \rangle} = \sqrt{\langle (2\ell-1)\text{-minors of }A_m \rangle}, $ where $\sqrt{I}$ denotes the radical of an ideal $I$ and $ \ell \in \ZZ_+ $.
	
	\item[(F3)] 
	The equality
	$ \det(A) = (\pf(A))^2 $ holds for any skew-symmetric square matrix $A$,
	where $ \pf(A) $ denotes the pfaffian of $ A $.
	\\
	Recall that a principal minor of a matrix is a minor for which the index set of the corresponding columns and rows coincide.
	Its follows from \cite[Theorem~~3.12(2) and (3)]{KLS} that we have 
	$\sqrt{\langle 2\ell\text{-minors of }A_m \rangle}
	=
	\sqrt{\langle \text{principal-}2\ell\text{-minors of }A_m \rangle} 
	= 
	\sqrt{\langle \text{pfaffian of principal-}2\ell\text{-minors of }A_m \rangle} $. 
\end{itemize}

\noindent 
By (F2), it is sufficient to consider only the vanishing loci determined by the $2\ell$-minors in the skew-symmetric setting.
Writing $ X_{\rm red} $ for the reduction of $ X $, 
for every $ \ell \in \ZZ_+ $, we have
\begin{equation}
	\label{eq:2l_2l-1}
	( Y^{\rm skew}_{m,2\ell} )_{\rm red} 
	\cong 
	( Y^{\rm skew}_{m,2\ell - 1} )_{\rm red}.
\end{equation}
Using (F3), we could further reduce this to the vanishing loci of the pfaffian of the principal-$ 2 \ell $-minors of $ A_m $.
Since we will mainly work with the transform of the matrix $ A_m $ along a blowing up, we stick to the $ 2 \ell $-minors.    
For more details on pfaffians and the ideals generated by them, we refer to \cite{KLS}.

\smallskip 

An {\em embedded resolution of singularities} of a reduced scheme $ Y \subset Z $ with $ Z $ regular, is a proper, birational morphism $ \pi : Z' \to Z $, for some regular scheme $ Z ' $,
such that 
\begin{itemize}
\item[(a)] the strict transform $ Y' $ of $ Y $ in $ Z '$ is regular,
\item[(b)] $ \pi $ is an isomorphism outside of the singular locus of $ Y $, 
i.e., $ \pi^{-1} (Z \setminus \Sing(Y)) \cong Z \setminus \Sing(Y) $,
\item[(c)] $ \pi^{-1}(\Sing(Y)) $ is a simple normal crossings divisor
(i.e., all irreducible components are regular and they intersect transversally) 
which intersects $ Y' $ transversally. 
\end{itemize}
One way to achieve this is to construct a finite sequence of blowing ups in regular centers contained in the singular locus of the strict transform of $ Y $.

\medskip 

It is necessary to be careful and to work in the reduced setting.
For example, $ Y^{\rm skew}_{2,2} $ is defined by the vanishing of 
$ \det
\begin{pmatrix}
	0 & x_{1,2}\\
	-x_{1,2} & 0 
\end{pmatrix}  
=  x_{1,2}^2 $.
Hence, $ Y^{\rm skew}_{2,2} $ is 
non reduced, which implies that no blowing up in a regular center improves the singularities, while on the other hand its reduction is regular.

\begin{MainThm}
	\label{Thm_1}
	Let $ m, \ell \in \ZZ_+ $ be positive integers with $ 2\ell \leq m$,
	let $ R_0 $ be a commutative regular ring with $\car(R_0) \neq 2$.
	%
	The following sequence of blowing ups is an embedded resolution of singularities for the reduction of the generic skew-symmetric determinantal singularity $ Y^{\rm skew}_{m,2\ell} \subset Z_{\rm skew} $,
	\[
		Z_{\rm skew} =: Z_0 \stackrel{\pi_1}{\longleftarrow} 
		Z_{1} \stackrel{{\pi_2}}{\longleftarrow}  		
		\ldots 
		\stackrel{\pi_{{\ell -1}}}{\longleftarrow} 
		Z_{\ell - 1},
	\]
	where $ \pi_{\alpha} \colon Z_\alpha \to Z_{\alpha- 1} $ is the blowing up with center the strict transform of $ {(Y^{\rm skew}_{m,2 \alpha})_{\rm red}} \cong {(Y^{\rm skew}_{m,2 \alpha-1})_{\rm red}} $ in $ Z_{\alpha-1 } $, 
	for $ \alpha \in \{ 1, \ldots, \ell - 1 \}$.
\end{MainThm}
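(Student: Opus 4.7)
\medskip
\noindent\textbf{Proof plan.} The strategy is induction on $\ell \geq 1$, engineered so that the first blow-up reduces the problem from $(m, \ell)$ to $(m-2, \ell-1)$. For the base case $\ell = 1$ the sequence of blow-ups is empty, and one only needs to observe that $(Y^{\rm skew}_{m,2})_{\rm red}$ is already regular inside $Z_{\rm skew}$. Indeed, since the $1$-minors of $A_m$ are precisely its entries, Fact (F2) yields
\[
\sqrt{\cI^{\rm skew}_{m,2}} \;=\; \sqrt{\langle\,\text{$1$-minors of }A_m\,\rangle}
\;=\; \langle\, x_{i,j} \mid 1 \leq i < j \leq m\,\rangle
\]
in $S_{\rm skew}$, so $(Y^{\rm skew}_{m,2})_{\rm red}$ is a regular linear subspace and conditions (a)--(c) hold trivially.

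For the inductive step, assume the theorem for $\ell - 1$ and arbitrary size $m' \geq 2(\ell - 1)$. After performing $\pi_1$, analyze $Z_1$ on its standard affine charts $U_{a,b}$, indexed by $1 \leq a < b \leq m$, with local coordinates $\{x_{a,b}\} \cup \{\, x_{i,j}' := x_{i,j}/x_{a,b} \mid (i,j) \neq (a,b)\,\}$ and exceptional divisor $E_1 = \{x_{a,b} = 0\}$. On $U_{a,b}$ the transform of the skew-symmetric matrix is $A_m = x_{a,b} \cdot A_m'$, where $A_m'$ is skew-symmetric with a $\pm 1$ entry at position $(a,b)$. The decisive tool is a skew-symmetry preserving reduction: for every invertible matrix $P$, the conjugate $P A_m' P^T$ is again skew-symmetric of the same rank. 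Using the $\pm 1$ entry as a pivot, a Schur-complement style unipotent conjugation produces
\[
P\, A_m'\, P^T \;=\; \begin{pmatrix} J & 0 \\ 0 & A_{m-2}''\end{pmatrix},
\]
where $J = \begin{pmatrix} 0 & 1 \\ -1 & 0 \end{pmatrix}$ sits in the $\{a,b\}$-rows and columns, and $A_{m-2}''$ is a skew-symmetric $(m-2) \times (m-2)$ matrix whose entries $y_{i,j}''$ differ from the $x_{i,j}'$ by polynomial expressions in the $x_{a,k}', x_{b,k}'$, giving a triangular and hence invertible change of coordinates on the chart.

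Rank invariance together with Fact (F2) then identifies, on $U_{a,b}$ and in the new coordinates, the strict transform of $(Y^{\rm skew}_{m, 2\alpha})_{\rm red}$ with the product of $(Y^{\rm skew}_{m-2, 2\alpha - 2})_{\rm red}$ with the affine space spanned by the auxiliary coordinates $\{x_{a,b}\} \cup \{x_{a,k}', x_{b,k}' \mid k \notin \{a,b\}\}$, for every $\alpha \geq 1$. Consequently, the subsequent blow-ups $\pi_2, \ldots, \pi_{\ell-1}$ correspond chart-locally to the inductive sequence for the case $(m-2, \ell-1)$, and the inductive hypothesis supplies regularity of the final strict transform together with SNC of the associated exceptional divisor. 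Since $E_1 = \{x_{a,b} = 0\}$ is cut out by an auxiliary coordinate, it is automatically transverse to all later exceptional components and to the strict transform. The chart-local statements glue to give the required global embedded resolution.

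The main technical obstacle is making the block reduction and its coordinate change explicit enough to identify the strict transforms of all intermediate $(Y^{\rm skew}_{m, 2\alpha})_{\rm red}$, for $\alpha = 2, \ldots, \ell - 1$, with their $(m-2)$-analogues, simultaneously on every chart $U_{a,b}$ and compatibly with the chart transitions. Checking that the exceptional divisors coming from earlier blow-ups remain SNC after the later blow-ups also requires some care, but reduces to routine local computations given the product structure exhibited on each chart.
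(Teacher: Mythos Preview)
Your plan is correct and matches the paper's approach: both blow up $(Y^{\rm skew}_{m,2})_{\rm red}$, work in a representative $X_{a,b}$-chart, apply skew-symmetry preserving row/column operations (equivalently, your congruence $A_m'\mapsto P A_m' P^T$) to reach the block form $\left(\begin{smallmatrix} J & 0 \\ 0 & \widetilde A_{m-2}\end{smallmatrix}\right)$ in new coordinates $y_{i,j}$, and then descend by induction (the paper phrases the induction on $m$ rather than on $\ell$, but the reduction $(m,r)\rightsquigarrow(m-2,r-2)$ is identical). The paper carries out the ``main technical obstacle'' you flag---identifying the strict transform of each $Y^{\rm skew}_{m,r}$ with its $(m-2)$-analogue---by an explicit determinantal verification of the ideal equality $\cI_r(A_m') = \cI_{r-2}(\widetilde A_{m-2})$, preceded by a lemma (Lemma~\ref{Lem:Trafo}) guaranteeing that the strict transform of the minor ideal is generated by the strict transforms of the individual minors because they are homogeneous; your ``rank invariance'' shortcut gives this only set-theoretically, so to make it rigorous you should either invoke Cauchy--Binet (which promotes congruence invariance of rank to the ideal-level statement $\cI_r(PA_m'P^T)=\cI_r(A_m')$) or argue via reducedness of strict transforms.
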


The situation changes slightly if we move on to $ \car(R_0) = 2 $. 
The relation $x_{i,j} = -x_{i,j}$ becomes $x_{i,j} = x_{j,i}$. 
In particular, for $i = j$ the relation $x_{i,j} = -x_{j,i}$ does not imply $x_{i,i} = 0$. 
So if $\car(R_0)=2$, the skew-symmetric  generic case naturally leads us to the generic symmetric case.

\medskip 

Let us introduce the symmetric setup. 
	Let $ R_0 $ be a regular ring of arbitrary characteristic.
We define
\[ 
	I_{\rm sym} := 
	\langle x_{i,j} - x_{j,i} \mid   
	1 \leq i < j \leq m
	\rangle,
\]	

\[ 
	S_{\rm sym} := R/I_{\rm sym}, 
	\hspace{10pt}
	Z_{\rm sym}:= \Spec(S_{\rm sym}),
	\hspace{10pt} 
	\cI_{m,r}^{\rm sym}:= \cJ_{m,r} + I_{\rm sym}, 
\]

\[ 
	Y_{m,r}^{\rm sym} :=\Spec(R/\cI^{\rm sym}_{m,r}) \subset Z_{\rm sym}. 
\]

\bigskip 

\noindent 
Therefore, $Y_{m,r}^{\rm sym}$ is 
given as the locus where all $ r $-minors of the following matrix $ B_m $ vanish,
\begin{equation}
\label{eq:Bm}
B_m := 
\begin{pmatrix}
x_{1,1}  &  x_{1,2} & \cdots     & x_{1,m}
\\
x_{1,2}  &  x_{2,2} &      & x_{2,m}
\\
\vdots &  & \ddots  & \vdots    
\\ 
x_{1,m}  &  x_{2,m} & \cdots     & x_{m,m}
\end{pmatrix}.
\end{equation}

\medskip 

\begin{MainThm}
	\label{Thm_2}
	Let $ m, r \in \ZZ_+ $ be positive integers with $ r \leq m $,
	let $ R_0 $ be a commutative regular ring. 
	%
	The following sequence of blowing ups is an embedded resolution of singularities for the generic symmetric determinantal singularity $ Y_{m,r}^{\rm sym} \subseteq Z_{\rm sym} $,
	\[
		Z_{\rm sym} =: Z_0 \stackrel{\pi_1}{\longleftarrow} 
		Z_{1} \stackrel{\pi_2}{\longleftarrow} 
		\ldots 
		\stackrel{\pi_{r-1}}{\longleftarrow} 
		Z_{r-1}
	\]
	where $  \pi_{\alpha} \colon Z_\alpha \to Z_{\alpha- 1} $ is the blowing up with center the strict transform of $ Y_{m,\alpha}^{\rm sym} $ in $ Z_{\alpha - 1 } $, for $ \alpha \in \{  1 , \ldots , r - 1 \} $.
\end{MainThm}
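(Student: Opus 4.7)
The plan is to proceed by induction on $ r $, with base case $ r = 2 $: here $ Y^{\rm sym}_{m,1} $ is the origin of $ Z_{\rm sym} $, and blowing it up gives the classical desingularization of the affine cone over the second Veronese embedding of $ \mathbb{P}^{m-1} $. For the inductive step, I would analyze $ \pi_1 $ chart-by-chart and show that, locally, the strict transform of $ Y^{\rm sym}_{m,r} $ is isomorphic to a generic symmetric determinantal singularity with strictly smaller parameters, to which the induction hypothesis applies.

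Because $ B_m $ is invariant under simultaneous permutations of rows and columns, it is enough to treat two representative affine charts of $ \pi_1 $: a diagonal chart with coordinate $ x_{1,1} $ and an off-diagonal chart with coordinate $ x_{1,2} $. In the diagonal chart, setting $ x_{i,j} = x_{1,1}\tilde{x}_{i,j} $ for $ (i,j) \neq (1,1) $, $ i \leq j $, yields $ B_m = x_{1,1}\tilde{B}_m $ with $ \tilde{B}_m $ having entry $ 1 $ in position $ (1,1) $. Using this unit as pivot, a symmetry-preserving congruence $ \tilde{B}_m \mapsto P^{\top} \tilde{B}_m P $, where $ P $ is unipotent lower-triangular with entries $ -\tilde{x}_{1,j} $ in its first column, reduces $ \tilde{B}_m $ to $ \mathrm{diag}(1, B') $ with $ B' $ an $ (m-1) \times (m-1) $ symmetric matrix. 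A direct calculation shows that the entries of $ B' $, together with the coordinates $ \tilde{x}_{1,2},\ldots,\tilde{x}_{1,m}, x_{1,1} $, form a regular system of parameters, and that the $ r $-minors of $ \mathrm{diag}(1, B') $ are generated (as an ideal) by the $ (r-1) $-minors of $ B' $. Consequently, the strict transform of $ Y^{\rm sym}_{m,r} $ in this chart is isomorphic to $ Y^{\rm sym}_{m-1,r-1} $ times a regular factor. An analogous analysis in the off-diagonal chart produces a $ 2 \times 2 $ block $ U $ with $ \det U $ a unit near the origin of the chart, which reduces the strict transform to $ Y^{\rm sym}_{m-2,r-2} $ times a regular factor.

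The main obstacle is to verify that these local reductions are compatible with the globally specified centers of the subsequent blow-ups $ \pi_2, \ldots, \pi_{r-1} $. Concretely, in the diagonal chart one needs to confirm that for each $ \alpha $ with $ 1 \leq \alpha \leq r-1 $ the strict transform of $ Y^{\rm sym}_{m,\alpha} $ in $ Z_1 $ corresponds, under the identification above, to $ Y^{\rm sym}_{m-1,\alpha-1} $ times the regular factor, so that $ \pi_\alpha $ matches the $ (\alpha-1) $-st blow-up in the inductive resolution of $ Y^{\rm sym}_{m-1,r-1} $. In the off-diagonal chart, one needs to check that the strict transform of $ Y^{\rm sym}_{m,\alpha} $ is either empty near the exceptional locus (for $ \alpha \leq 2 $) or corresponds to $ Y^{\rm sym}_{m-2,\alpha-2} $ times the regular factor (for $ \alpha \geq 3 $), so that the intermediate blow-ups act trivially there while the rest matches the inductive resolution of $ Y^{\rm sym}_{m-2,r-2} $. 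Verifying these compatibilities, together with the simple normal crossings condition for the total exceptional divisor and its transversal intersection with the final strict transform, constitutes the technical heart of the proof; once it is established, the induction closes and the claimed resolution of $ Y^{\rm sym}_{m,r} $ in $ r-1 $ blow-ups follows.
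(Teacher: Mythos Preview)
Your plan is essentially the paper's proof: blow up $Y^{\rm sym}_{m,1}$, reduce to two representative charts by symmetry, perform a symmetry-preserving reduction in each chart to a smaller generic symmetric matrix, and then verify that the strict transforms of the intermediate loci $Y^{\rm sym}_{m,\alpha}$ match the corresponding loci for the smaller matrix so that the induction closes. Two minor differences are worth flagging. First, you induct on $r$ while the paper inducts on $m$; since the chart reduction decreases both parameters simultaneously, the two schemes are interchangeable. Second, and more interestingly, you propose to obtain the key equality $\cI_r(B_m') = \cI_{r-1}(B')$ in the diagonal chart via the congruence $P^{\top}\tilde B_m P = \mathrm{diag}(1,B')$ together with the standard fact that $\cI_r$ is invariant under multiplication by invertible matrices and the elementary observation $\cI_r(\mathrm{diag}(1,B')) = \cI_{r-1}(B')$; the paper instead establishes the analogous equality (and its off-diagonal counterpart $\cI_r(B_m') = \cI_{r-2}(\widetilde B_{m-2})$) by explicitly enlarging submatrices and expanding determinants, as in its proof of Theorem~\ref{Thm_1}. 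Your route is cleaner and avoids those computations. One point to tighten: in the off-diagonal chart you say $\det U$ is a unit ``near the origin'', but the reduction must cover the whole chart. The paper handles this by observing that the locus where $\det U = x'_{1,1}x'_{2,2}-1$ fails to be a unit lies entirely inside the union of the diagonal charts $D_+(X_{i,i})$, so one may assume $\det U$ is invertible throughout the portion of the $X_{1,2}$-chart not already treated; you should make this explicit.
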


The proofs of Theorems~\ref{Thm_1} and~\ref{Thm_2} are done by induction on the size $ m \geq 1 $ of the matrix defining the determinantal singularity.
If $ m = 1 $, there is nothing to show and the case $ m = 2 $ is easy (Example~\ref{Ex:22}). 
For $ m > 2 $, the key ingredient is to apply suitable row and column operations to the strict transform of $A_m$ resp.~$B_m$ which preserve the (skew-) symmetric structure in each chart of a blowing up.
Eventually, the determinantal structure allows to make a reduction to cases, where the respective integer $ m $ decreased. 

\smallskip

Based on Theorems~\ref{Thm_1} and \ref{Thm_2}, 
the first named author studied aspects of an implementation as well as the complexity of the constructed desingularizations.
In particular, see 
\cite[Section~A.5.1 and Section~B.2]{SabrinaThesis}
and \cite[Chapter~8]{SabrinaThesis}, where algorithmic aspects for more general determinantal singularities are considered. 

\smallskip

Let us briefly summarize the content of the paper: In Section~\ref{Sec:Ex},
we discuss examples for small values of $ m $ to visualize the ideas 
and to provide the basis for our induction. 
In Section~\ref{sec:proof1}, we prove Theorem~\ref{Thm_1} and in Section~\ref{sec:proof2}, we show Theorem~\ref{Thm_2}.

All rings considered in this article are assumed to be commutative.

\section{First Examples}
\label{Sec:Ex}
First, we fix some notation and illustrate the ideas and appearing phenomena for some examples.

\begin{Rk}\label{Rk:blowup}
Let $ R_0 $ be a regular ring and $ Z = \Spec(R_0[t_1, \ldots, t_N]) \cong \mathbb{A}^N_{R_0} $. 
Consider the blowing up $ \pi \colon \Bl_D(Z) \to Z $
of $ Z $ with center $ D = \Spec(R_0[t_1, \ldots, t_N]/ \langle t_i \mid i \in I \rangle ) $,
for some $ I \subseteq \{ 1, \ldots, N \} $.
Set $ d := \# I $.
Globally, $ \Bl_D(Z) $ is described by the $ \Proj $-construction as 
\[
	\Bl_D(Z) = \Proj (R_0 [t_1, \ldots, t_N][T_i \mid i \in I ]/ \langle  t_i T_j - t_j T_i \mid i, j \in I \rangle )
	\subseteq 
	\mathbb{A}^N_{R_0} \times \mathbb{P}^{d-1}_{R_0} ,
\]
where $ (T_i \mid i \in I) $ are projective coordinates,
and $ \pi $ is induced by the projection on the first factor $ 
\mathbb{A}^N_{R_0} \times \mathbb{P}^{d-1}_{R_0}  \to 
\mathbb{A}^N_{R_0} $. 
In particular, $ \Bl_D(Z) $ is covered by the affine charts $ D_+ (T_i) $
with $ i \in I $,
where $ D_+ (T_i) $ denotes the standard open set of points where $ T_i $ is invertible. 
We call $ D_+ (T_i) $ also the {\em $ T_i $-chart} of the blowing up. 

Fix $ i \in I $. 
In the $ T_i $-chart, the relation $ t_i T_j - t_j T_i = 0  $ can be rewritten as 
$
	t_j = t_i \frac{T_j}{T_i},
$
for 
$ j \in I \setminus \{ i \} $.
This provides that $ \Bl_D (Z) \cap D_+ (T_i) $ is isomorphic to 
\[ 
	\Spec(R_0[t_k, t_i, \frac{T_j}{T_i} \mid k \notin I, j \in I \setminus \{i\} ]) 
	\cong \mathbb{A}_{R_0}^{N} .
\]
Often the abbreviation $ t_j ' := \frac{T_j}{T_j} $ is used and by setting $ t_i' := t_i $ and $ t_k' := t_k $ for $ k \notin I $,
it is said that the variables of the $ T_i $-chart are $ (t_1', \ldots, t_N') $, 
where the relation to the variables $ (t_1, \ldots, t_N) $ before the blowing up is described by
\begin{equation}
	\label{eq:trans_rule}
	t_j = \begin{cases}
	t_i' t_j',  & \mbox{if } j \in I \setminus \{ i \}, \\
	t_j', 	& \mbox{otherwise.}
	\end{cases}
\end{equation} 

Note that the exceptional divisor of the blowing up $ \pi $,
i.e., the preimage of $ D $ along $ \pi $
(the locus where $ \pi $ is not an isomorphism), 
is given by the divisor $ \operatorname{div}(t_i') $ in the $ T_i $-chart. 

Finally, recall that for an element $ f \in R_0 [t_1, \ldots, t_N] $, its {\em strict transform} in the $ T_i $-chart is the element $ f' \in R_0 [t_1', \ldots, t_N' ] $
which we obtain by applying the transformation rule \eqref{eq:trans_rule} to $ f $ and then factoring the exceptional variable $ t_i' $ as much as possible. 
On the other hand, given $  X  \subseteq Z $ its {\em strict transform $ X' \subseteq Z' $} is the closure of $ \pi^{-1}(X \setminus D) $ in $ Z' $.  

\smallskip 

In our setting, we have $ (t_1, \ldots, t_N) = (x_{i,j} \mid i, j \in \{ 1, \ldots, m\} ) $ and we speak of the $ X_{1,2} $-chart, or similar expressions.
Besides reflecting the origin of the variables $ (x_{i,j} ) $ coming from a matrix structure, the index set $ \{ 1, \ldots, m \}^2 $ has no impact or deeper meaning.
\end{Rk}

\begin{Ex}[Theorems~\ref{Thm_1} and \ref{Thm_2} for $ m = 2 $]
	\label{Ex:22}
	Fix a regular basis ring $ R_0 $.
		Let us consider $ Y_{2,r}^{\rm skew} $ and $ Y_{2,r}^{\rm sym} $ for $ r \in \{ 1, 2 \} $.
	\begin{enumerate}
		\item 
		It is clear that both are regular for $ r = 1 $ since we assume $ R_0 $ to be regular.
	
		\item
		Suppose that $ \car(R_0) \neq 2 $.
		We have already seen in the introduction 
		that the reduction of $ Y_{2,2}^{\rm skew} $ is regular and hence no further blowing ups are required.
		
		\item 
		In the symmetric case, $ Y_{2,2}^{\rm sym} $ is a singular hypersurface given by the equation
		\[
			\det \begin{pmatrix}
			x_{1,1} & x_{1,2} \\
			x_{1,2} & x_{2,2} 
			\end{pmatrix}
			= x_{1,1} x_{2,2} - x_{1,2}^2 = 0 .
		\] 
		The singular locus is equal to $ Y_{2,1}^{\rm sym} $ and by blowing up the center $ D_1 = Y_{2,1}^{\rm sym} = V(x_{1,1},x_{1,2,},x_{2,2}) $, 
		we obtain an embedded desingularization. 
	\end{enumerate}
	Therefore, we have verified Theorems~\ref{Thm_1} and~\ref{Thm_2} for the special case $ m = 2 $.
\end{Ex}

In general, $ Y_{m,r}^{\rm skew} $ and $ Y_{m,r}^{\rm sym} $ are not hypersurfaces if $ r < m $.
In order to be able to control them after a blowing up, we need the following result.

\begin{Lem}
	\label{Lem:Trafo}
	Let $ R_0 $ be a regular ring and 
	$ X = \Spec (R_0 [t_1, \ldots, t_N] / I ) = Z \cong \mathbb{A}^N_{R_0} $,
	for some non-zero ideal $ I \subset R_0 [t_1, \ldots, t_N] $.
	Consider the blowing up $ \pi \colon Z' \to  Z $ with center $ D = \Spec(R_0 [t_1, \ldots, t_N] / \langle t_1, \ldots, t_N \rangle ) $.
	If there exists a system of generators $ (f_1, \ldots, f_m) $ for $ I $ such that each $ f_j $ is homogeneous in the variables $ (t_1, \ldots, t_N ) $,
	then the ideal of the strict transform of $ X $ in the $ T_i $-chart is generated by the strict transforms of $ (f_1, \ldots, f_m) $.   
\end{Lem}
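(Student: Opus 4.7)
The plan is to work in a fixed $T_i$-chart of the blowing up using the coordinates $(t_1', \ldots, t_N')$ from Remark~\ref{Rk:blowup}, where $t_i'$ is the exceptional variable. By the description in that remark, the strict transform $X'$ is the closure of $\pi^{-1}(X) \setminus V(t_i')$ in the chart, so its defining ideal is the saturation $(\pi^*I : (t_i')^{\infty})$ of the pullback ideal with respect to $t_i'$. Everything then reduces to computing this saturation explicitly.

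First I would apply the transformation rule \eqref{eq:trans_rule} to each homogeneous generator. A monomial $t_1^{\alpha_1} \cdots t_N^{\alpha_N}$ of total degree $|\alpha| = d_k$ pulls back to $(t_i')^{d_k} \prod_{j \neq i} (t_j')^{\alpha_j}$: every $t_j$ with $j \neq i$ contributes one factor of $t_i'$, and so does $t_i$. Summing over the monomials of $f_k$ and using homogeneity gives $\pi^* f_k = (t_i')^{d_k} f_k'$, where $f_k'$ is the dehomogenization of $f_k$ at $t_i = 1$. In particular, $f_k' \in R_0[t_j' \mid j \neq i]$ does not involve $t_i'$ at all, so it is already the strict transform of $f_k$ in the chart.

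It remains to show that $(\tilde I : (t_i')^{\infty}) = \langle f_1', \ldots, f_m' \rangle$, where $\tilde I := \langle (t_i')^{d_1} f_1', \ldots, (t_i')^{d_m} f_m' \rangle$. The inclusion from right to left is immediate from $(t_i')^{d_k} f_k' \in \tilde I$. For the reverse direction, set $e := t_i'$ and $R'' := R_0[t_j' \mid j \neq i]$, so the chart ring is $R''[e]$ and each $f_k'$ lies in $R''$. Given $h \in R''[e]$ with $e^N h = \sum_k a_k (e^{d_k} f_k')$, I would expand $h = \sum_{\ell} h_{\ell} e^{\ell}$ and $a_k = \sum_j a_{k,j} e^j$ with coefficients in $R''$, and then compare coefficients of each power of $e$; since no $f_k'$ contains $e$, matching the coefficient of $e^{\ell + N}$ yields $h_\ell = \sum_{k,j \, : \, j + d_k = \ell + N} a_{k,j}\, f_k' \in \langle f_1', \ldots, f_m' \rangle$, whence $h$ itself lies in this ideal. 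The only real subtlety is the interplay of the saturation description of the strict transform with the fact that homogeneity forces the pulled-back factor of $t_i'$ to be \emph{exactly} $(t_i')^{d_k}$, leaving a $t_i'$-free remainder; once this is observed, the coefficient-comparison argument closes without further difficulty.
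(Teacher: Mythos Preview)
Your proof is correct and takes a somewhat different route from the paper's. You identify the strict-transform ideal in the chart as the saturation $(\pi^*I : (t_i')^\infty)$ and then exploit the key observation that homogeneity forces $\pi^*f_k = (t_i')^{d_k} f_k'$ with $f_k'$ living entirely in $R'' = R_0[t_j' \mid j \neq i]$; the saturation then reduces to a transparent coefficient comparison in the graded ring $R''[t_i']$. The paper instead argues by contradiction: assuming some $h \in I$ has strict transform outside $\langle f_1',\ldots,f_m'\rangle$, it expands $h = \sum \lambda_{i,A} t^A f_i$ and uses the $P$-order to iteratively strip off the lowest-degree part of the expansion (which, by homogeneity of the $f_i$, cancels identically) until the order of $h$ matches the minimum order in the expansion, at which point the transformation rule directly yields $h' \in \langle f_1',\ldots,f_m'\rangle$. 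Your saturation argument is cleaner and more conceptual, making explicit \emph{why} homogeneity is exactly what is needed: it is what places all the $f_k'$ in the $t_i'$-degree-zero part of the chart ring, so the ideal they generate is already $t_i'$-saturated. The paper's argument is more hands-on and avoids invoking the saturation description of the strict transform, working instead directly with elements of $I$ and their orders.
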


In general, the hypothesis to have homogeneous generators is necessary.
For example, consider 
$ R = K[x,y,z] $, $ P = \langle x, y, z \rangle \subset R $,  
and the ideal
$ I = \langle x^2 -y^3,x^2-z^5\rangle \subset R $, where $K$ is any field. 
Clearly, $ (g_1, g_2) := (x^2 - y^3 , x^2 - z^5 ) $ are not homogeneous with respect to $ P $.
Set $ h := y^3 - z^5 = g_2 - g_1 $.
If we blow up the origin $ D = \Spec (R/P) $ and consider the $ Z $-chart, we get the strict transforms
$ g_1' = x'^2 - y'^3 z' $,
$ g_2' = x'^2 - z'^3  $,
$ h' = y'^3 - z'^2 $.
Note that $ g_2' - g_1' = z' (y'^3 - z'^2 ) $. 
Let $ I' $ be the ideal of the strict transform $ X' $ in the given chart. 
We see that $ h' \in I' $,
but $ h' \notin \langle g_1', g_2' \rangle $.
Hence, $ I' \neq \langle g_1' , g_2' \rangle $.

\begin{proof}[Proof of Lemma~\ref{Lem:Trafo}]
	Suppose the conclusion is false. 
	Then, there exists $ h \in I = \langle f_1, \ldots, f_m \rangle $ such that its strict transform $ h' $ is not contained in $ \langle f_1', \ldots, f_m' \rangle $.
	Set $ P := \langle t_1, \ldots, t_N \rangle $ and  
	$ r_i := \operatorname{ord}_P (f_i) := 
	\sup \{ k \geq 0 \mid f_i \in P^k \} $, for $ i \in \{ 1, \ldots, N \} $.

	We expand $ h $ as finite sum $ h = \sum_{i=1}^m \sum_{A\in \ZZ_{\geq 0}^N} \lambda_{i,A} t^A f_i $ with coefficients $ \lambda_{i,A} \in R_0 $.
	We have $ \operatorname{ord}_P (h) \geq \min \{ \operatorname{ord}_P (t^A f_i) \mid \lambda_{i,A} \neq 0  \} =: \mu $ and if equality holds, then it is easy to verify that $ h' \in \langle f_1', \ldots, f_m' \rangle $ by applying the rule of transformation \eqref{eq:trans_rule}.
	Thus, this would lead to a contradiction. 
		
	On the other hand, $ \operatorname{ord}_P (h) > \mu $ implies that there is cancellation within the part of the expansion which is homogeneous of degreee $ \mu $ in $ (t) $, i.e.,
	$ \sum_{i=1}^m \sum_{A : |A|+r_i = \mu} \lambda_{i,A} t^A f_i \equiv 0 $,
	where $ |A| := A_1 + \ldots + A_N $ for $ A = (A_1, \ldots, A_N) $. 
	Notice that the hypothesis that all $ f_i $ are homogeneous is used here, as the degree $ \mu $-part does not lie in the ideal $ \langle f_1, \ldots, f_m \rangle $, in general. 
	But now, we may omit the degree $ \mu $-part of the expansion, as it is obsolete, and repeat our argument for the new expansion.
	Since $ h $ has to be non-zero and the expansion is finite, we eventually end up in the first case, which provides a contradiction, as desired.
\end{proof}

In order to discuss the relevant examples for the lemma in our context and for the later use, we introduce the following notation.

\begin{Def}
	\label{Def:IrM}
	Let $ r, m \in \ZZ_+ $ with $ r \leq m $.
	Let $ R $ be a regular ring and let $ M = (m_{i,j}) $ be an $ m \times m $ matrix with entries in $ R $. 
	We define $ \cI_r (M) \subseteq R  $ to be the ideal generated by the $ r $-minors of $ M $,
	i.e.,
	\[
		\cI_r (M) = \langle \det( M_{I,J} ) \mid I, J \subseteq \{ 1, \ldots, m \} : \# I = \# J = r \rangle  ,
	\]
	where $ M_{I,J} $ denotes the $ r \times r $ submatrix of $ M $ given by $ (m_{i,j})_{i\in I, j \in J } . $
\end{Def}

\medskip 

We fix a regular ring $ R_0 $ for the remainder of the section.

\begin{Ex}
	\label{Ex:std}
	Let $ r, m \in \ZZ_{+} $ with $ r \leq m $ and consider the matrices $ A_m $ and $ B_m $ introduced in \eqref{eq:Am} and \eqref{eq:Bm}, respectively.
	Using the notation of the introduction, we have 
	\[  
		\cI_{m,r}^{\rm skew} = \cI_r (A_m) 
		\hspace{.5cm}
		\mbox{ and }
		\hspace{.5cm}
		\cI_{m,r}^{\rm sym} = \cI_r (B_m) 
	\]
	in $ R_0 [x_{i,j} \mid 1 \leq i < j \leq m ] $ (with $ \car(R_0) \neq 2 $), resp.~$ R_0 [x_{i,j} \mid 1 \leq i \leq j \leq m ] $.
	
	Let $ I, J \subseteq \{ 1, \ldots, m \} $ be subsets of cardinality $ r $.
	Set $ f_{I,J}^{{\rm skew}} := \det((A_m)_{I,J}) $ and $ f_{I,J}^{{\rm sym}} := \det((B_m)_{I,J}) $. 
	Clearly, $ f_{I,J}^{{\rm skew}} $ and $ f_{I,J}^{{\rm sym}} $ are homogeneous of degree $ r $ in the variables $ (x_{i,j}) $.
	Therefore, these elements provide systems of generators for the ideal $ \cI_{m,r}^{\rm skew} $, resp.~$ \cI_{m,r}^{\rm sym} $, fulfilling the hypothesis of Lemma~\ref{Lem:Trafo}.
\end{Ex}

In the next example, we illustrate how we preserve the symmetry by appropriate row and column operations.

\begin{Ex}[Theorem~\ref{Thm_2} for $ m = 3 $]
	\label{Ex:33sym}
	Consider $ Y_{3,3}^{\rm sym} $, which is determined by the vanishing of the determinant of $ B_3 $,
	\[
	B_{3} = 
	\begin{pmatrix}
	x_{1,1}  &  x_{1,2} & x_{1,3} 
	\\
	x_{1,2}  &  x_{2,2} &  x_{2,3} 
	\\
	x_{1,3}  &  x_{2,3} & x_{3,3} 
	\end{pmatrix}.
	\]
	By Theorem~\ref{Thm_2},
	the first center to blow up
	is 
	$ 
		Y^{\rm sym}_{3,1} = \Spec (S_{\rm sym}/ \langle x_{1,1} , x_{1,2} , \ldots, x_{3,3} \rangle ) .
	$ 
	Our strategy for the next step is to check the following two things after the first blowing up:
	\begin{enumerate}
		\item 
		All singularities of the strict transform of $Y_{3,3}^{\rm sym} $ lie on the $X_{i,i}$-charts, $i \in \{ 1,2,3 \} $;
		
		\item
		The strict transform of $Y_{3,2}^{\rm sym}$ provides the next center to blow up. 
	\end{enumerate}

	Consider the $ X_{1,1}$-chart,
	i.e., as described in Remark~\ref{Rk:blowup},
	we have $ x_{1,1} = x_{1,1}' $ and 
		$ x_{i,j} = x_{1,1}' x_{i,j}' $  for $ (i,j) \neq (1,1) $.
	The strict transform of $ Y_{3,3} $ is defined by the vanishing locus of the determinant of
	\[ 
	B_{3}' := 
	\begin{pmatrix}
	1  &  x_{1,2}' & x_{1,3}' 
	\\
	x_{1,2}'  &  x_{2,2}' &  x_{2,3}'
	\\
	x_{1,3}'  &  x_{2,3}' & x_{3,3}' 
	\end{pmatrix}.
	\]
	We perform elementary row and column operations to eliminate all entries  in the first row and column expect for the $ 1 $ at position $ (1,1) $.
	Since these do not change the determinant, we have
	\[ 
	\det (B_{3}') = 
	\det
	\begin{pmatrix}
	1  &  0 & 0
	\\
	0  &  x_{2,2}'- x_{1,2}'^2 &  x_{2,3}'-x_{1,2}'x_{1,3}'
	\\
	0  &  x_{2,3}'-x_{1,2}'x_{1,3}' & x_{3,3}' - x_{1,3}'^2
	\end{pmatrix}
	 = 
	 \det
	 \begin{pmatrix}
	 y_{2,2} &  y_{2,3}
	 \\
	 y_{2,3} & y_{3,3}
	 \end{pmatrix},
	\]
	where 
	$ y_{2,2} := x_{2,2}' - x_{1,2}'^2 $,
	$ y_{2,3} := x_{2,3}' - x_{1,2}'x_{1,3}' $,
	$ y_{3,3} := x_{3,3}' - x_{1,3}'^2 $.
	Recall that the exceptional divisor of the first blowing up is $ \operatorname{div}(x_{1,1}') $ and $ x_{1,1}' $ is transveral to $ y_{2,2}, y_{2,3}, y_{3,3} $.

	We have reduced the problem to $ r = m = 2 $ and the ideal of the upcoming center is $ I := \langle y_{2,2}, y_{2,3}, y_{3,3} \rangle $. 
	This blowing up resolves the singularities, as mentioned in Example~\ref{Ex:22}(3). 
	
	We claim that the ideal $ I $ provides a global center. 
	More precisely, in the following it is shown that the strict transform of $Y_{3,2}^{\rm sym} $ is given by a sheaf of ideals which in the $ X_{1,1} $-chart is given by $ I $:
	We observe that $ y_{i,j} $ is the strict transform of the $ 2 $-minor of $ B_3 $ that contains $ x_{1,1} $ and $ x_{i,j} $,
	\[
	\det \begin{pmatrix}
	x_{1,1} & x_{1,j} \\ x_{1,i} & x_{i,j}
	\end{pmatrix}
	= x_{1,1} x_{i,j} - x_{1,j} x_{1,i}.
	\]
	The strict transform of every $2$-minor of $B_3$ that does not contain $x_{1,1}$ is already contained in $I$:
	For $ i, j, \ell \in \{ 2, 3 \} $, we have
	\[ 
		\det \begin{pmatrix}
			x_{1,\ell}' &  x_{i,\ell}' \\ x_{1,j}'  & x_{i,j}'
		\end{pmatrix}
		=
		\det \begin{pmatrix}
			x_{1,\ell}' & y_{i,\ell} + x'_{1,i} x'_{1,\ell} \\ x_{1,j}'  & y_{i,j} + x'_{1,i} x'_{1,j}
		\end{pmatrix}
		=
		\det \begin{pmatrix}
			x_{1,\ell}' & y_{i,\ell} \\ x_{1,j}'  & y_{i,j} 
		\end{pmatrix} 
		\in I ,
	\]
	where we apply an elementary column operation for the second equality.
	Further, we get
	
	\[ 
	\det \begin{pmatrix}
		x_{2,2}' &  x_{2,3}' \\ x_{2,3}'  & x_{3,3}'
	\end{pmatrix}
	=
	\det \begin{pmatrix}
		y_{2,2} + x_{1,2}'^2 & y_{2,3} + x'_{1,2} x'_{1,3} \\ y_{2,3} + x'_{1,2} x'_{1,3}  & y_{3,3} + x_{1,3}'^2 
	\end{pmatrix}
	=
	\]

	\[
	=
	\det \begin{pmatrix}
		1 & 0 & 0 \\
		x_{1,2}' & y_{2,2} + x_{1,2}'^2 & y_{2,3} + x'_{1,2} x'_{1,3} \\ x_{1,3}' & y_{2,3} + x'_{1,2} x'_{1,3}  & y_{3,3} + x_{1,3}'^2 
	\end{pmatrix}
	=
	\det \begin{pmatrix}
		1 & -x_{1,2}' & -x_{1,3}' \\
		x_{1,2}' & y_{2,2}  & y_{2,3}  \\ 
		x_{1,3}' & y_{2,3}   & y_{3,3}  
	\end{pmatrix}
	.
	\]
	
	\noindent 
	In order to see that this is contained in $ I $, expand the last determinant with respect to the first row.

	In other words, the center of the second blowing up coincides with the strict transform of the determinantal singularity $ Y^{\rm sym}_{3,2} $ defined by the $ 2 $-minors of $ B_3 $ as suggested by Theorem~\ref{Thm_2}. 
	The situation in the $X_{2,2}$- and the $X_{3,3}$-chart is analogous.
	
	\smallskip 
	
	Let us study the $ X_{2,3} $-chart.
	(The cases of the $ X_{1,2} $- resp.~the $ X_{1,3} $-chart are analogous.)
	In order to have a clear distinction to the $ X_{1,1} $-chart, we write 
		$ \widetilde{x}_{i,j} $ instead of $ x_{i,j}' $ for the variables in the present chart. 
		Hence, the transformation of the variables is 
		$ x_{2,3} = \widetilde{x}_{2,3} $
		and 
		$ x_{i,j} =  \widetilde{x}_{2,3} \widetilde{x}_{i,j} $ for $ (i,j) \neq (2,3) $.
	The strict transform of $ Y_{3,3}^{\rm sym} $ is given by the determinant
	\[ 
	\det 
	\begin{pmatrix}
	\widetilde{x}_{1,1}  &  \widetilde{x}_{1,2} & \widetilde{x}_{1,3} 
	\\
	\widetilde{x}_{1,2}  &  \widetilde{x}_{2,2} &  1
	\\
	\widetilde{x}_{1,3}  &  1 &  \widetilde{x}_{3,3} 
	\end{pmatrix}
	=
	(\widetilde{x}_{2,2} \widetilde{x}_{3,3} - 1 ) \widetilde{x}_{1,1} + h,
	\]
	where $ h $ does neither depend on $ \widetilde{x}_{1,1} $ nor on the exceptional variable $ \widetilde{x}_{2,3} $.
	
	Using the notation of Remark~\ref{Rk:blowup},
	the set of points for which  
	$ \widetilde{x}_{2,2} \widetilde{x}_{3,3} -1 =  \frac{X_{2,2}}{X_{2,3}} \frac{X_{3,3}}{X_{2,3}} - 1 = 0 $ 
	is entirely contained in the intersection of the $ X_{2,2} $- and the $ X_{3,3} $-chart.
	Therefore, we may assume without loss of generality that $ \widetilde{x}_{2,2} \widetilde{x}_{3,3} - 1 $ is invertible in the given chart.
	This implies that the strict transform of $ Y_{3,3}^{\rm sym} $ is regular and transversal to the exceptional divisor $ \operatorname{div}(\widetilde{x}_{2,3}) $.
	In other words, we have resolved the singularities of $ Y_{3,3}^{\rm sym} $.
	
	Notice that $ \widetilde{x}_{2,2} \widetilde{x}_{3,3} - 1 $ is the strict transform of the $ 2 $-minor of $ B_3 $ containing $ x_{2,3} $ twice.
	Hence, the strict transform of $ Y_{3,2}^{\rm sym} $ is empty in the $ X_{2,3} $-chart, if we assume that we neglect the part that is already contained in $ D_+ (X_{2,2}) \cap D_+ (X_{3,3}) $.

	\smallskip 
	
	In conclusion, we have seen that the strict transform of $ Y_{3,2}^{\rm sym} $, 
	which is the next center for blowing up 
	proposed by Theorem~\ref{Thm_2}, is
	regular and it is contained in the union of three charts given by the diagonal elements,
	$ (Y^{\rm sym}_{3,2})' \subset D_+ (X_{1,1}) \cup D_+ (X_{2,2}) \cup D_+ (X_{3,3}) $.
	After blowing up $ (Y^{\rm sym} _{3,2})' $, we have resolved $ Y^{\rm sym}_{3,3} $. 
		
	{Note that} we have especially seen that $ Y^{\rm sym}_{3,2} $ is resolved after blowing up $ Y^{\rm sym}_{3,1} $.	
\end{Ex}

\medskip 

The next example illustrates the method for the reduction in the skew-symmetric setting.

\begin{Ex}[Theorem~\ref{Thm_1} for $ m = 4 $]
	\label{Ex:44skew}
	Assume that $ \car(R) \neq 2 $. 
	Recall that $ Y_{4,4}^{\rm skew} $, 
	is given by $ \det (A_4 ) = 0  $, where
\[
	A_{4} = 
	\begin{pmatrix}
	0  &  x_{1,2} & x_{1,3} & x_{1,4}
	\\
	-x_{1,2}  &  0 &  x_{2,3} & x_{2,4}
	\\
	-x_{1,3}  &  -x_{2,3} & 0 & x_{3,4}
	\\ 
	-x_{1,4}  &  -x_{2,4} & -x_{3,4}     & 0
	\end{pmatrix}.
\]
Theorem~\ref{Thm_1} suggests
$ Y^{\rm skew}_{4,1} = \Spec (S_{\rm skew}/ \langle x_{1,2}, \ldots, x_{3,4} \rangle) $
as the first center to blow up.

We consider the $ X_{1,2} $-chart of this blowing up. 
(All other charts are analogous.)
In there, 
we have $ x_{1,2} = x_{1,2}' $ and $ x_{i,j} = x_{1,2}' x_{i,j}' $ for $ (i,j) \neq (1,2) $.
	In particular, the strict transform
	$ (Y^{\rm skew}_{4,4})' $ is defined by $ \det (A_4') = 0 $,
	where
\[
	A_{4}' = 
	\begin{pmatrix}
	0  &  1 & x_{1,3}' & x_{1,4}'
	\\
	-1  &  0 &  x_{2,3}' & x_{2,4}'
	\\
	-x_{1,3}'  &  -x_{2,3}' & 0 & x_{3,4}'
	\\ 
	-x_{1,4}'  &  -x_{2,4}' & -x_{3,4}'     & 0
	\end{pmatrix}.
\]
Observe that $ ( Y^{\rm skew}_{4,1} )' = \varnothing $ (since the strict transform of the center is always empty) and $ ( Y^{\rm skew}_{4,2} )' = \varnothing $ (since there is a $ 2 $-minor that is equal to one after the blowing up).

We perform elementary row operations on $ A_4' $.
First, we add $x_{2,3}'$-times the first row to the third row and $x_{2,4}'$-times the first row to the fourth row. 
After that the second column becomes $ (1,0,0,0)^T $ and we can eliminate the entries at position $ (1,3) $ and $ (1,4) $ via column operations.
We get
\[
	\det(A_4' ) 
	= 
	\det
	\begin{pmatrix}
	0  &  1 & 0 & 0
	\\
	-1  &  0 &  x_{2,3}' & x_{2,4}'
	\\
	-x_{1,3}'  &  0 & x_{1,3}'x_{2,3}' & x_{3,4}' + x_{1,4}' x_{2,3}'
	\\ 
	-x_{1,4}'  &  0 & -x_{3,4}' + x_{1,3}' x_{2,4}'    & x_{1,4}'x_{2,4}'
	\end{pmatrix}
	.
\]

Next, we add $ x'_{2,3} $-times the first column on the third column and $ x'_{2,4} $-times the first column on the fourth column 
	to obtain that the second row becomes $ (-1,0,0,0) $.
	Then, we eliminate the entries at position $ (3,1) $ and $ (4,1) $ via row operations.
	This provides 
\[
	\det(A_4')= 
	\det 
	\begin{pmatrix}
	0  &  1 & 0 & 0
	\\
	-1  &  0 &  0 & 0
	\\
	0  &  0 & 0 & x_{3,4}' + x_{1,4}' x_{2,3}' - x_{1,3}' x_{2,4}'
	\\ 
	0  &  0 & -x_{3,4}' + x_{1,3}' x_{2,4}'-x_{1,4}' x_{2,3}'    & 0
	\end{pmatrix}
	.
\] 
By expanding the determinant on the right hand side, we obtain
\[
\det(A_4')
=
	\det 
	\begin{pmatrix}
	 0 & y_{3,4}
	\\ 
	-y_{3,4} & 0
	\end{pmatrix}
	= y_{3,4}^2,
	\ \ \ \mbox{for } \ 
	y_{3,4} := x_{3,4}' + x_{1,4}' x_{2,3}' - x_{1,3}' x_{2,4}'. 
\]

Therefore, $ ( Y_{4,4}^{\rm skew} )'_{\rm red} $  coincides with the
divisor $ \operatorname{div}(y_{3,4}) $ in the given chart.
The latter is regular and transversal to the exceptional divisor $ \operatorname{div}(x_{1,2}) $.
Therefore, the singularities of the reduction are resolved. 

We point out that $ y_{3,4} $ is the strict transform of the $ 3 $-minor of $ A_4 $ containing $ x_{1,2} $ twice and $ x_{3,4} $, i.e., the determinant of $(A_4)_{\{1,2,3\},\{1,2,4\}}$:
\[
	x_{1,2} \big( 
	x_{1,2} x_{3,4} + x_{1,4} x_{2,3} - x_{1,3} x_{2,4}
	\big) 
	=   
	\det \begin{pmatrix}
	0 & x_{1,2} & x_{1,4} \\
	-x_{1,2} & 0 & x_{2,4} \\
	-x_{1,3} & -x_{2,3} & x_{3,4}
	\end{pmatrix}.
\]
Furthermore, note that for every non-zero $ 3 $-minor the corresponding matrix contains exactly one element $ x_{i,j} $ twice
and it is up to sign of the form
\[
	x_{i,j} \big( 
	x_{1,2} x_{3,4} + x_{1,4} x_{2,3} - x_{1,3} x_{2,4}
	\big).
\]
This illustrates \eqref{eq:2l_2l-1} in the given explicit situation for $ \ell = 2$.
\end{Ex} 

For completeness, let us also have a look at the skew-symmetric case for $ m = 3 $.

\begin{Ex}[Theorem~\ref{Thm_1} for $ m = 3 $]
	\label{Ex:33skew}
	As explained in the introduction, 
		$ Y_{3,1}^{\rm skew} $ 
	and $ Y_{3,3}^{\rm skew} $ are regular.
	Hence, by \eqref{eq:2l_2l-1}, the same is true for the reduction of $ Y_{3,2}^{\rm skew} $.
\end{Ex}

\section{Proof of Theorem \ref{Thm_1}}
\label{sec:proof1}

We come to the general proof of Theorem~\ref{Thm_1}.
First, we recall the setup and fix some abbreviations:
\begin{itemize}
	\item 
	$ R_0 $ is a regular ring of characteristic different from $ 2 $.
	
	\item 
	$ S := S_{\rm skew} \cong R_0 [x_{i,j} \mid 1 \leq i < j \leq m ] $
	and $ Z := Z_{\rm skew} = \Spec(S) $. 
	
	\item 
	$ A_{m} =  
	\begin{pmatrix}
		0  &  x_{1,2} & \cdots     & x_{1,m}
		\\
		-x_{1,2}  &  0 & \cdots     & x_{2,m}
		\\
		\vdots & \vdots & \ddots  & \vdots    
		\\ 
		-x_{1,m}  &  -x_{2,m} & \cdots     & 0
	\end{pmatrix} $
	and
	$ Y_{m,r} := Y_{m,r}^{\rm skew} = \Spec (S/\cI_{r} (A_m)) $.
\end{itemize}
By~\eqref{eq:2l_2l-1}, we may restrict to the case $ r = 2 \ell $. 
In Theorem~\ref{Thm_1}, we proposed an explicit desingularization for the reduction of $ Y_{m,2\ell}^{\rm skew} $. 

\begin{proof}[Proof of Theorem~\ref{Thm_1}]
	First, observe that the reduction of $ Y_{m,2} $ is always regular, as $ Y_{m,1} $ is isomorphic to $ \Spec(R_0) $ and $ R_0 $ is a regular ring by assumption.	
	
	We consider the case of $ Y_{m,m} $. 
		As the examples of the previous section indicate, we will prove the statement for $ Y_{m,r} $ with $ r < m $ along the way.
	We perform an induction on the size $ m \in \ZZ_+ $ of the matrix $ A_m $.
	Notice that we include the case $ m $ odd here 
	(even though we already know that $ Y^{\rm skew}_{2\ell+1,2\ell+1} $ is regular)
	in order to provide a unified presentation.
	For $ m = 1 $, there is nothing to prove. 
	Furthermore, we treated the cases $ m \in \{ 2, 3, 4 \} $ in Examples~\ref{Ex:22}, \ref{Ex:33skew}, \ref{Ex:44skew}, respectively.
	Therefore, we go on to the induction step and assume $ m \geq 5 $ in the following.

	The first center proposed by Theorem~\ref{Thm_1} is $ D_1 := Y_{m,1} \cong  (Y_{m,2})_{\rm red} $,
	 which is regular.
	 Let us look at a chart of the blowing up with center $ D_1 $. 
	 Without loss of generality, we consider the 
	 $ X_{1,2}$-chart.
 	(Note that for the $ X_{k,\ell} $-chart, we may perform a suitable interchange of columns and rows followed by a renaming of the variables in order to attain the same setting as in the $ X_{1,2} $-chart, up to a possible interchange of signs.)
 
 	In the given chart, the strict transform $ Y_{m,m}' $ of $ Y_{m,m} $ is determined by the strict transform $ f' $ of $ f := \det (A_m) $.
	We have that $f'$ is equal to the following determinant
	\[
		f'=
		\det  
			\begin{pmatrix}
				0  &  1 &    x'_{1,3}& \cdots  & x'_{1,m}
				\\
				-1  &  0 &   x'_{2,3}  & \cdots & x'_{2,m}
				\\
				-x'_{1,3}  &  -x'_{2,3} & 0&\cdots     & x'_{3,m}
				\\
				\vdots & \vdots & & \ddots  & \vdots    
				\\ 
				-x'_{1,m}  &  -x'_{2,m} & -x'_{3,m}& \cdots     & 0
			\end{pmatrix} .
	\]
	We eliminate all entries in the second column of the matrix except for the $ 1 $ at position $ (1,2) $ by adding the first row $ x_{2,i}' $-times to the $ i $-th row, for $ i \geq 3 $.
	After that we perform column operations to eliminate all entries in the first row except for the $ 1 $ at position $(1,2) $. 
	The latter step has no effect on the other entries of the matrix since we cleaned the second column beforehand.
	We obtain that $f'$ is equal to 
	\[
		\det 
		\begin{pmatrix}
			
			0  &  1 &    0& \cdots  & 0
			\\
			-1  &  0 &   x'_{2,3}  & \cdots & x'_{2,m}
			\\
			-x'_{1,3}  &  0 & x'_{1,3}x'_{2,3} &\cdots     & x'_{3,m}+x'_{1,m}x'_{2,3}
			\\
			\vdots & \vdots & & \ddots  & \vdots    
			\\ 
			-x'_{1,m}  &  0 & -x'_{3,m}+x'_{1,3}x'_{2,m}& \cdots     & x'_{1,m}x'_{2,m}
		\end{pmatrix}
		.
	\]
	In order to regain the skew-symmetry, 
	we first add  $(-x_{1,i})$-times the second row to the $ i $-th row, for $ i \geq 3 $,
	which eliminates all entries in the first column except for the $ -1 $ at position $ (2,1) $.
	Using the latter entry, we then perform column operations to clean up the second row.
	This
	leads to 		
	\[
		f'=\det  
			\begin{pmatrix}
				 0 & 1 & 0 &0& \cdots & 0\\
				 -1&  0 & 0 &0& \cdots & 0\\
				   0&0&0&y_{3,4} &\cdots  & y_{3,m}
				\\
				   0&0&-y_{3,4}  &0& \cdots & y_{4,m}
				\\
				\vdots & \vdots & \vdots & \vdots & \ddots  & \vdots    
				\\ 
				0&0& -y_{3,m}  &  -y_{4,m} & \cdots     & 0
			\end{pmatrix},
	\]
	where we introduce
	\begin{equation}
		\label{eq:y_ij} 
		y_{i,j} := x'_{i,j}-x'_{2,j}x'_{1,i}+x'_{1,j}x'_{2,i},
		\ \ \   
		\mbox{ for } 
		\
		3 \leq i < j \leq m .
	\end{equation}
	
	Observe that $(y_{i,j})$ are independent variables 
	and they are transversal to the variable $ x'_{2,1} $,
	which describes the exceptional divisor of the blowing up in the present chart.

	Finally, we expand the determinant with respect to the first two columns and get
	\[
		f'=\det 
		\begin{pmatrix}
			   0&y_{3,4} &\cdots  & y_{3,m}
			\\
			   -y_{3,4}  &0& \cdots & y_{4,m}
			\\
			\vdots & \vdots & \ddots  & \vdots    
			\\ 
			-y_{3,m}  &  -y_{4,m} & \cdots     & 0
		\end{pmatrix}.
	\]
	
	\medskip 
		
	Note that the latter is the determinant of the generic skew-symmetric matrix of size $ m-2 $.
	Therefore, by induction, Theorem~\ref{Thm_1} provides an embedded resolution for the reduced subscheme determined by the vanishing of $ f' $.
		
	It remains to show that the desingularization obtained by induction coincides with the one proposed in the statement of Theorem~\ref{Thm_1}.
	In other words, we have to prove that the strict transform $ Y_{m,r}' $ of $ Y_{m,r} $ is equal to 
	$ Y_{m-2,r-2} $ (up to renaming the generic variables) in the present chart,
	i.e., if we set 
	\[
		A_m' := 
		\begin{pmatrix}
		0  &  1 &    x'_{1,3}& \cdots  & x'_{1,m}
		\\
		-1  &  0 &   x'_{2,3}  & \cdots & x'_{2,m}
		\\
		-x'_{1,3}  &  -x'_{2,3} & 0&\cdots     & x'_{3,m}
		\\
		\vdots & \vdots & & \ddots  & \vdots    
		\\ 
		-x'_{1,m}  &  -x'_{2,m} & -x'_{3,m}& \cdots     & 0
		\end{pmatrix},
		\ \ \
		\widetilde{A}_{m-2} := 
		\begin{pmatrix}
		0&y_{3,4} &\cdots  & y_{3,m}
		\\
		-y_{3,4}  &0& \cdots & y_{4,m}
		\\
		\vdots & \vdots & \ddots  & \vdots    
		\\ 
		-y_{3,m}  &  -y_{4,m} & \cdots     & 0
		\end{pmatrix}
	\]
	and use the notation introduced in Definition~\ref{Def:IrM},
	then we have to prove:
	\begin{equation}
	\label{eq:to_show_Am} 
		\cI_{r-2} (\widetilde{A}_{m-2}) = \cI_r (A_m'). 
	\end{equation}
	This will also settle the case for $ m $ odd and $ r < m $.
	
	Notice that \eqref{eq:to_show_Am} provides that the desingularizations obtained by induction in every chart of the first blowing up glue to a global resolution of singularities as desired
	since the strict transform of the original minors are a global center.
	Moreover, we obtain from this that $ (Y_{m,3}')_{\rm red}  =  (Y_{m,4}')_{\rm red}  $ 
	is regular and transversal to the exceptional divisor $ \operatorname{div}(x_{1,2}') $
	since the last two properties are true for $ Y_{m-2,1} $. 
	
	\smallskip 

	We already explained that $ Y_{m,m}' $ identifies with $ Y_{m-2,m-2} $.
	Moreover, we have $ Y_{m,1}' = \varnothing $ (since the strict transform of the center is always empty)
	and $ Y_{m,2}' = \varnothing $ 
	(since we have $ (Y_{m,2})_{\rm red} \cong Y_{m,1} $).

	Let $ r \geq 3 $.
	By Example~\ref{Ex:std} and Lemma~\ref{Lem:Trafo}, 
	$ Y_{m,r}' $ is generated by the $ r $-minors of the matrix $ A_m' $.
	As we have seen before, after some elementary row and column operation the matrix $ A_m' $ becomes
	\[
		\begin{pmatrix}
		0 & 1 & 0 \\
		-1&  0 & 0 \\
		0&0&  \widetilde{A}_{m-2}
		\end{pmatrix}.
	\]
	
	For every $ r $-minor $ g_{I,J} $ of $ A_m' $ 
	corresponding to $ I, J \subset\{ 1, \ldots, m \} $ with $ J \supset \{ 1,2 \} \subset I $,
	we may perform the same row and column operations and obtain that it is equal to the $ (r-2) $-minor of the matrix 
	$ 
		\widetilde{A}_{m-2}
	$,  
	whose index sets correspond to $ I \setminus \{ 1,2 \} $ and $ J \setminus \{ 1, 2 \} $ (up to a shift in the index set).
	This shows
	\[
		\cI_{r-2} (\widetilde{A}_{m-2}) \subseteq \cI_r (A_m'). 
	\]
	
	We are left with the task to show that 
	the inclusion is an equality, 
	$ \cI_{r-2} (\widetilde{A}_{m-2}) = \cI_r (A_m') $.
	For this, we have to prove that any $ r $-minor of $ A_m ' $ which we did not consider for the first inclusion is contained in $ \cI_{r-2} (\widetilde{A}_{m-2}) $.
	The latter are those $ r $-minors $ g_{I,J} $ of $ A_m' $  with $ \{1,2\} \not\subset I $ or $ \{1,2\} \not\subset J $.

	We introduce some notations. 
	Let $ I =: \{ i(1), \ldots, i(r) \} $ with $ i(1) < i(2) < \ldots < i(r) $. 
	For $ j \in J $, we define 
	\[
		\kappa := \kappa(j) := 
		\begin{cases}
			\max \{ \alpha \in \{ 1, \ldots, r \} \mid i(\alpha) \leq j \} ,
			& \mbox{if } i(1) \leq j ,
			\\ 
			0, & \mbox{if } i(1) > j.	
		\end{cases}
		.
	\]
	If $ \kappa(j) = j $, we set $ x_{j,j}' := 0 $ and $ y_{j,j} := 0 $.
	Denote by $ \cM_{I,J} $ the $ r \times r $ submatrix of $ A_m $ whose rows are indexed by $ I $ and whose columns are index by $ J $.
	Hence, $ g_{I,J} = \det (\cM_{I,J} ) $.

	First, we discuss the case that $ \{1,2\} \not\subset I $ and $ \{1,2\} \not\subset J $.
	Using the definition of $ y_{i,j} $ \eqref{eq:y_ij} to substitute $ x_{i,j}' $ in $ \cM_{I,J} $,
	we get
	\[
	\cM_{I,J} 
	=   
	\begin{pmatrix}
		x_{i(1),j}'
		\\
		\vdots
		\\ 
		x_{i(\kappa),j}'
		\\
		- x_{j,i(\kappa+1)}'
		\\
		\vdots 
		\\
		- x_{j,i(r)}'	
		\end{pmatrix}_{j \in J}
	=	
		\begin{pmatrix}
			y_{i(1),j} + x'_{2,j} x'_{1,i(1)}  - x'_{1,j} x'_{2,i(1)}
			\\
			\vdots
			\\ 
			y_{i(\kappa),j} + x'_{2,j} x'_{1,i(\kappa)}  - x'_{1,j} x'_{2,i(\kappa)}
			\\
			-y_{j,i(\kappa+1)} - x'_{2,i(\kappa+1)} x'_{1,j}  + x'_{1,i(\kappa+1)} x'_{2,j}
			\\
			\vdots 
			\\
			- y_{j,i(r)} - x'_{2,i(r)} x'_{1,j} + x'_{1,i(r)} x'_{2,j}
		\end{pmatrix}_{j \in J}
	.
	\]
	Notice that, if $ i(\kappa) = j $, then the entry at position $ (i(k),j) $ is zero. 
	Further, if $ \kappa(j) = 0 $, then $ j $-th column begins with $ - x_{j,i(1)}' $. 
	
	Set $ E_{2} := \begin{pmatrix}
		1 & 0 \\ 0 & 1 
	\end{pmatrix} $ and 
	$ \mathcal{B} :=  \begin{pmatrix}
	E_{2} & 0
	\\
	\cA & \cM_{I,J} 
	\end{pmatrix}  $, 
	where $ \cA $ is an arbitrary $ r \times 2 $ matrix and $ 0 $ stands for the matrix of size $ 2 \times r $ with all entries zero.
 	We have 
	\[
		\det ( \mathcal{B}  ) = \det (\cM_{I,J}) = g_{I,J} . 
	\]
	Let us consider the matrix $ \cA $ given by
	\[
		\cA := 
		\begin{pmatrix}
			- x'_{1,i(1)}  &  -x'_{2,i(1)}
			\\
			\vdots & \vdots  
			\\
			 -x'_{1,i(r)}  &  -x'_{2,i(r)}
		\end{pmatrix}	
	.	
	\]
	Observe that the entries of $ \cA $ are coming from the first two columns of $ A_m' $.

	In the next step, we use the entries of $ \mathcal{A} $ 
	to eliminate the terms $ x'_{2,j} x'_{1,i}  - x'_{1,j}x'_{2,i} $ in the columns of $ \mathcal{B} $ corresponding to $ \cM_{I,J} $ (whose index ranges in $ J $).
	More precisely, we apply the following elementary column operations to $ \mathcal B $ for every $ j \in J $:
	\begin{itemize}
		\item 
		Add $ x_{2,j}' $-times the first column of $ \mathcal{B} $ to the column with index $ j $;
		
		\item 
		add $ (-x_{1,j}') $-times the second column of $ \mathcal{B} $ to the column with index $ j $.
	\end{itemize}
	This leads to 
	
	\[
	g_{I,J} 
	= 
\det \left( 
	\begin{array}{cc}
		1 &  0 
		\\
		0  & 1 
		\\
		- x'_{1,i(1)}  &  -x'_{2,i(1)}
		\\
		\vdots & \vdots 
		\\ 
		- x'_{1,i(\kappa)}  &  -x'_{2,i(\kappa)}
		\\
		- x'_{1,i(\kappa+1)}  &  -x'_{2,i(\kappa+1)}
		\\
		\vdots & \vdots 
		\\
		-x'_{1,i(r)}  &  -x'_{2,i(r)}
	\end{array} 
	\begin{pmatrix}
		x_{2,j}' 
		\\
		-x_{1,j}' 
		\\
		y_{i(1),j} 
		\\
		
		\vdots
		\\ 
		y_{i(\kappa),j}
		\\
		-y_{j,i(\kappa+1)} 
		\\
		\vdots 
		\\
		- y_{j,i(r)}
	\end{pmatrix}_{j \in J}
\right) 
.
	\]
	
	\noindent
	We expand the determinant with respect to the second column so that it becomes a sum of determinants of matrices of size $ ( r+1 ) \times (r+1) $.
	(Of course, there are certain coefficients appearing in the sum, but these will not be relevant, so we neglect them here).
	We continue and expand each of these matrices with respect to their respective first column so that $ g_{I,J} $ is expressed as a sum of determinants of $ r \times r $ matrices. 
	Observe that the latter matrices fulfill the property that if we neglect the first and second row then all entries are of the form $ y_{i,j} $ or $ -y_{i,j} $ for some $ i \in I $ and $ j \in J $. 
	We proceed and expand the determinants of the appearing $ r \times r $ matrices with respect to the second row
	and after that we expand the resulting determinants of $ (r-1) \times (r-1) $ matrices with respect to the first row.
	In total, due to the described property of the $ r \times r $ matrices, 
	we have written $ g_{I,J} $ as a sum of $ (r-2) $-minors of the matrix $ \widetilde{A}_{m-2} $,
	i.e., $ g_{I,J} \in \cI_{r-2} (\widetilde{A}_{m-2}) $,
	as claimed.

	It is not hard to adapt the argument for the remaining
	$ r $-minors $ g_{I,J} $ of $ A_m' $ with $ \{1,2\} \not\subset I $ or $ \{1,2\} \not\subset J $:
	First, one has to enlarge the matrix $ \cM_{I,J} $ determining $ g_{I,J} $ 
	by the needed entries of the first and second column of $ A_m' $ which are not already appearing in $ \cM_{I,J} $.
	Note that if $ \{1,2\} \subset J $, then no enlargement is required.
	On the other hand, observe, if $ 1 \in I $ (resp.~$ 2 \in I $), then the corresponding row for $ \cA $ is induced by $ (0 \  1 ) $ (resp.~$ (-1 \ 0 ) $).
	After that, we substitute $ x_{i,j}' $ using \eqref{eq:y_ij}, for $ 3 \leq i < j \leq m $, and apply elementary column operations in order to reach that these entries are of the form $ \pm y_{i,j} $.
	If $ 1 \in I $ (or $ 2 \in I $), then these column operations eliminate all entries of the first (resp.~second) row of the $ \cM_{I,J} $-part of the analogue of $ \mathcal B $.
	Finally, by a suitable expansion of the determinant, one achieves the desired statement
	$ g_{I,J} \in \cI_{r-2} (\widetilde{A}_{m-2}) $. 
	
	In conclusion, we proved \eqref{eq:to_show_Am}, which ends the proof.
\end{proof}

\section{Proof of Theorem~\ref{Thm_2}}
\label{sec:proof2}

Let us turn to the generic symmetric case. 
We fix the notation:

\begin{itemize}
	\item 
	$ R_0 $ is a regular ring.
	
	\item 
	$ S := S_{\rm sym} \cong R_0 [x_{i,j} \mid 1 \leq i \leq j \leq m ] $
	and $ Z := Z_{\rm sym} = \Spec(S) $. 
	
	\item 
	$ B_{m} =  
	\begin{pmatrix}
		x_{1,1}  &  x_{1,2} & \cdots     & x_{1,m}
		\\
		x_{1,2}  &  x_{2,2} & \cdots     & x_{2,m}
		\\
		\vdots & \vdots & \ddots  & \vdots    
		\\ 
		x_{1,m}  &  x_{2,m} & \cdots     & x_{m,m}
	\end{pmatrix} $
	and
	$ Y_{m,r} := Y_{m,r}^{\rm sym} = \Spec (S/\cI_{r} (B_m)) $.
\end{itemize}

\noindent 
The goal of this section is to show that the sequence of blowing ups described in Theorem~\ref{Thm_2} is an embedded resolution of singularities for $ Y_{m,r}^{\rm sym} $.

\begin{proof}[Proof of Theorem~\ref{Thm_2}]
As in the proof of Theorem~\ref{Thm_1}, we perform an induction on the size $m\in \mathbb Z_+$ of the matrix $B_m$. 
If $ m = 1 $, there is nothing to prove. 
In Examples~\ref{Ex:22} and~\ref{Ex:33sym}, we treated the cases $ m \in \{ 2, 3 \} $.
Thus, let $ m \geq 4 $. 

Analogous to before, we study $ Y_{m,m} $ and handle the cases $Y_{m,r}$ with $ r < m $ along the way.  
Clearly, $Y_{m,1}$ is always regular, since $Y_{m,1}\cong \Spec(R_0)$ and $R_0$ is assumed to be regular. 
Following Theorem~\ref{Thm_2}, we blow up the center $ D_1 := Y_{m,1} $.

\smallskip 
 
We begin with the $X_{1,1}$-chart as representative of the $X_{i,i}$-charts for $  i \in \{ 1, \ldots, m \} $
(since by suitably interchanging columns and rows followed by renaming variables, we reach the $ X_{1,1} $-chart for the other  $X_{i,i}$-charts). 
The strict transform $f'$ of $ f := \det ( B_m) $ is given by the determinant of the matrix
\[
B_m' :=  
\begin{pmatrix}
	1  &  x'_{1,2} & \cdots     & x'_{1,m}
	\\
	x'_{1,2}  &  x'_{2,2} & \cdots     & x'_{2,m}
	\\
	\vdots & \vdots & \ddots  & \vdots    
	\\ 
	x'_{1,m}  &  x'_{2,m} & \cdots     & x'_{m,m}
\end{pmatrix}. 
\]
We eliminate the entries in the first column by subtracting $ x_{1,j}$-times the first row from the $j$-th row for $j \in \{ 2, \ldots, m \} $.
We get
\[
f'=\det  
\begin{pmatrix}
	1  &  x'_{1,2} & \cdots     & x'_{1,m}
	\\
	0 &  x'_{2,2}-x'^2_{1,2} & \cdots     & x'_{2,m}-x'_{1,m}x'_{1,2}
	\\
	\vdots & \vdots & \ddots  & \vdots    
	\\ 
	0  &  x'_{2,m}-x'_{1,2}x'_{1,m} & \cdots     & x'_{m,m}-x'^2_{1,m}
\end{pmatrix}
=\det  
\begin{pmatrix}
	y_{2,2} & \cdots     & y_{2,m}
	\\
	\vdots & \ddots  & \vdots    
	\\ 
	y_{2,m} & \cdots     &y_{m,m}
\end{pmatrix},
\]
where we expand the determinant with respect to the first column in the last equality and introduce 
\[
y_{i,j} := x'_{i,j}-x'_{1,i}x_{1,j}' 
\hspace{1cm} \mbox{ for } i, j \in \{ 2, \ldots, m \}.
\]
These $ (y_{i,j})_{i,j} $ are independent variables which are transversal to the exceptional variable $x'_{1,1}$.
This shows that $f'$ is defined by the determinant of the generic symmetric matrix of size $ m-1 $.
In other words, in the given chart, 
the strict transform $ Y_{m,m}' $ of $ Y_{m,m} $ is isomorphic to $ Y_{m-1,m-1} $. 
By induction, the remaining resolution process is determined. 

Let $ \widetilde{B}_{m-1} $ be the symmetric matrix with entries $ y_{i,j} $.
Analogous to the proof of Theorem~\ref{Thm_1}, it remains to prove that the ideal generated by the $( r-1 ) $-minors of $ \widetilde{B}_{m-1} $ coincides with the strict transform of the $ r $-minors of $ B_m $, 
\[
	\cI_{r-1} (\widetilde{B}_{m-1}) = \cI_r (B_m'), 
\] 
for $ r \geq 2 $.
We already proved the equality for $ r = m $.  
Hence, let $ 1 < r < m $. 
The inclusion $ ``\subseteq" $ is clear since the $ (r-1) $-minors of $ \widetilde{B}_{m-1} $ correspond to the $ r $-minors of $ B_m' $ involving  the entry of $ B_m' $ at position $ (1,1) $. 
For the reverse inclusion $ ``\supseteq" $, we apply the analogous arguments as in the proof of Theorem~\ref{Thm_1}: 
We consider an $ r $-minor of $ B_m' $, apply the definition of $ y_{i,j} $ to substitute the corresponding $ x_{i,j}' $, if necessary we enlarge the matrix by the respective entries of the first column of $ B_m' $
to perform suitably column operations and expand the determinant to reach the desired inclusion.

\smallskip

 Next let us have a look at the $X_{1,2}$-chart, which is a representative of the $X_{i,j}$-charts for $1\le i<j \le m $. 
The strict transform $f'$ of $f = \det(B_m)$ is equal to
 \[
	f' = \det (B_m'),
	\hspace{0.7cm} 
	\mbox{ for } 
	B_m' :=  
		\begin{pmatrix}
			x'_{1,1}  &  1 & \cdots     & x'_{1,m}
			\\
			1  &  x'_{2,2} & \cdots     & x'_{2,m}
			\\
			\vdots & \vdots & \ddots  & \vdots    
			\\ 
			x'_{1,m}  &  x'_{2,m} & \cdots     & x'_{m,m}
		\end{pmatrix}. \]
		
		We eliminate all entries in the first row and second column by subtracting $ x'_{2,j} $-times the first row from the $j$-th row for $j \in \{  2,\ldots,m \}$
		and then cleaning up the first row analogous to before.
		We obtain 
		
		\[
		f'= \det  
		\begin{pmatrix}
			0  &  1 & 0& \cdots     & 0
			\\
			1-x'_{1,1}x'_{2,2} &  0 & x'_{2,3}-x'_{2,2}x'_{1,3}& \cdots     & x'_{2,m}-x'_{2,2}x'_{1,m}
			
			\\
						x'_{1,3}-x'_{1,1}x'_{2,3} & 0 & x'_{3,3}-x'_{2,3}x'_{1,3} &\cdots & x'_{3,m}-x'_{2,3}x'_{1,m}  
			\\
			\vdots & \vdots & \vdots &\ddots  & \vdots    
			\\ 
			x'_{1,m}-x'_{1,1}x'_{2,m}  &  0 & x'_{3,m}-x'_{2,m}x'_{1,3}& \cdots     & x'_{m,m}-x'_{2,m}x'_{1,m}
		\end{pmatrix}.
		\]
	
		Since we already discussed the $ X_{i,i} $-charts, we may neglect the part of the given $ X_{1,2} $-chart which is also contained in some $ X_{i,i} $-chart, for $ i \in \{ 1, \ldots, m \} $. 
		In particular, we may assume that $ x_{1,1}' = \frac{X_{1,1}}{X_{1,2}} $ is  not a unit 
		(using the notation of Remark~\ref{Rk:blowup}),
		cf.~Example~\ref{Ex:33sym}.
		Hence, may assume that 
		$\varepsilon := 1-x'_{1,1}x'_{2,2}$ is invertible in present chart. 
		 
		In order to regain the symmetry, we multiply the first column by $\varepsilon^{-1}$
		and multiply the $j$-th row by $\varepsilon$, for $ j \in \{ 3, \ldots, m \} $. 
		This provides
		
		\[
		f'= \phi \cdot 
		\det\begin{pmatrix}
			 0& 1 & 0 &\cdots & 0\\  
			1 & 0 & x'_{2,3}-x'_{2,2}x'_{1,3} & \cdots & x'_{2,m}-x'_{2,2}x'_{1,m}\\
		x'_{1,3}-x'_{1,1}x'_{2,3} & 0 &\varepsilon ( x'_{3,3}-x'_{2,3}x'_{1,3}) & \cdots & \varepsilon(x'_{3,m}-x'_{2,3}x'_{1,m})	
			
			\\
			\vdots & \vdots & \vdots &\ddots  & \vdots    
			\\ 
			x'_{1,m}-x'_{1,1}x'_{2,m} & 0 & \varepsilon(x'_{3,m}-x'_{2,m}x'_{1,3})& \cdots     & \varepsilon(x'_{m,m}-x'_{2,m}x'_{1,m})
		\end{pmatrix}, \]
		for $ \phi $ invertible. 
		Hence, $ \phi $ may be neglected when considering the vanishing locus. 
		
		In the next step, we use the $ 1 $ at position $ (2,1) $ to eliminate all other entries of the first column.
		More precisely, for $ i \in \{ 3, \ldots , m \} $,
		we subtract
		$ (x'_{1,i}-x'_{1,1}x'_{2,i}) $-times the second row
		 from the $ i $-th row.
		 Afterwards, we expand the determinant with respect to the first two columns.  
		Altogether, this leads to 
		
		\[
		 f'=
		(-1) \cdot \phi \cdot  \det ( \widetilde{B}_{m-2}) 
	, 
	\]
		where 
		\[
			\widetilde{B}_{m-2} := \begin{pmatrix}
				y_{3,3} &\cdots & y_{3,m}\\
				\vdots &\ddots  & \vdots    
				\\ 
				y_{3,m}& \cdots     & y_{m,m}
			\end{pmatrix}
		\]
		and
		\[ 
			y_{i,j} := 
			\varepsilon (x'_{i,j}-x'_{2,i} x'_{1,j}) - (x'_{1,i}-x'_{1,1}x'_{2,i})(x'_{2,j}-x'_{2,2}x'_{1,j}),
		\] 
		for $ i, j \in \{ 3, \ldots, m \} $.
		Note that this definition is well-defined 
		since $\varepsilon$ is a unit
		and $ (y_{i,j})_{i,j} $ are independent variables which are transversal to the exceptional variable $x'_{1,2}$.
		
		In conclusion, the strict transform $ Y_{m,m}' $ of $ Y_{m,m} $ in the present chart is isomorphic to $ Y_{m-2,m-2} $.
		By induction, Theorem~\ref{Thm_2} holds for $ Y_{m-2,m-2} $ and the remaining embedded resolution process is known. 
		Analogous to before, it remains to prove 
		$ \cI_{r-2} (\widetilde{B}_{m-2}) = \cI_r (B_m') $,
		for $ r \in \{ 3, \ldots, m-1 \} $.
		Since similar arguments as before apply,
		we leave the details to the reader as an exercise. 
		
		Notice that in the present chart, 
		the vanishing locus of the $2$-minors of $ B_m' $ is empty, since we assume that $\varepsilon$ is invertible. 
		Therefore, from a global perspective, 
		we do not see the second center of the resolution procedure described in the statement of Theorem~\ref{Thm_2} here.

	\smallskip 
	
	In conclusion, we deduced that the local computations in each chart coincide with the global sequence of blowing ups described in Theorem~\ref{Thm_2} . 
	Furthermore, an inductive argument shows that this is indeed an embedded resolution of singularities.
\end{proof} 

\begin{acknowledgements}
	Bernd Schober thanks Laura Escobar for joint discussions on (skew-) symmetry preserving decompositions of matrices in the context of a different topic which inspired the methods for reduction in the present article. \\
	Both authors thank the referees for useful comments on an earlier version of the article and in particular, for pointing out parts that needed clarification in the proof of Theorem~\ref{Thm_1}. 
\end{acknowledgements}

\noindent 
{\bf Declarations}

\smallskip 

\noindent 
{\bf Data Availibility Statement } 
Data sharing not applicable to this article as no data sets were
generated or analyzed during the current study.

\smallskip 

\noindent 
{\bf Conflict of interest } On behalf of all authors, the corresponding author states that there is no conflict of interest.

\def\cprime{$'$}

\end{document}